\theoremstyle{definition}
\theoremstyle{remark}
\numberwithin{equation}{section}
\newtheorem{tm}{Theorem}[section]
\newtheorem{rk}{Remark}[section]
\newtheorem{ap}{Assumption}[section]
\newtheorem{lm}{Lemma}[section]
\newcommand{\ee}{\mathbb E}
\newcommand{\pp}{\mathbb P}
\newcommand{\nn}{\mathbb N}
\newcommand{\zz}{\mathbb Z}
\newcommand{\rr}{\mathbb R}
\newcommand{\CC}{\mathcal C}
\newcommand{\OO}{\mathcal O}
\newcommand{\PP}{\mathcal P}
\newcommand{\OOO}{\mathscr O}
\newcommand{\FFF}{\mathscr F}
\newcommand{\<}{\langle}
\renewcommand{\>}{\rangle}
\allowdisplaybreaks \allowdisplaybreaks[4]
\begin{document}


\title[Strong Approximation of Monotone SPDEs with White Noise]
{Strong Approximation of Monotone Stochastic Partial Differential Equations driven by White Noise}

\author{Zhihui Liu}
\address{Department of Mathematics, 
The Hong Kong University of Science and Technology, 
Clear Water Bay, Kowloon, Hong Kong}
\curraddr{}
\email{zhliu@ust.hk}
\thanks{}
\author{Zhonghua Qiao}
\address{Corresponding author. Department of Applied Mathematics,
The Hong Kong Polytechnic University,
Hung Hom, Kowloon, Hong Kong}
\curraddr{}
\email{zqiao@polyu.edu.hk}

\subjclass[2010]{Primary 60H35; Secondary 65L60, 65M15}

\keywords{monotone stochastic partial differential equations,
backward Euler-spectral Galerkin scheme,
strong convergence rate,
martingale-type 2 Banach space}

\date{\today}

\dedicatory{}

\begin{abstract}
We establish an optimal strong convergence rate of a fully discrete numerical scheme for second order parabolic stochastic partial differential equations with monotone drifts, including the stochastic Allen--Cahn equation, driven by an additive space-time white noise.
Our first step is to transform the original stochastic equation into an equivalent random equation whose solution possesses more regularity than the original one.
Then we use the backward Euler in time and spectral Galerkin in space to fully discretize this random equation.
By the monotone assumption, in combination with the factorization method and stochastic calculus in martingale-type 2 Banach spaces, we derive a uniform maximum norm estimation and a H\"older-type regularity for both stochastic and random equations.
Finally, the strong convergence rate of the proposed fully discrete scheme is obtained.
Several numerical experiments are carried out to verify the theoretical result.
\end{abstract}

\maketitle


\section{Introduction}
\label{sec1}

Strong approximations for stochastic partial differential equations (SPDEs) with Lipschitz coefficients have been well studied, see, e.g., \cite{ACLW16(SINUM), BJK16(SINUM), CHL17(SINUM), CLS13(SINUM)} and references therein.
For certain types of SPDEs driven by colored noises with non-Lipschitz coefficients, \cite{CHL17(JDE), Dor12(SINUM), FLZ17(SINUM)} obtained strong convergence rates for numerical approximations by using the monotonicity or exponential integrability and Sobolev embedding to control the maximum norm bounds of the exact and numerical solutions.
It is an interesting and difficult problem to derive strong convergence rates of fully discrete schemes for second order parabolic SPDEs with non-Lipschitz coefficients driven by space-time white noise.
In particular, to the best of our knowledge, there exist few works on strong approximations of SPDEs with general monotone drifts driven by space-time white noise.
This is the main motivation for the present study.

Our main concern in this paper is to derive the strong convergence rate of a fully discrete scheme for the following parabolic SPDE with monotone drift driven by an additive Brownian sheet $W$ in a stochastic basis $(\Omega,\FFF,(\FFF_t)_{t\in [0,T]},\pp)$:
\begin{align}\label{ac}
\begin{split}
&\frac{\partial u(t,x)}{\partial t}
=\frac{\partial^2 u(t,x)}{\partial x^2} +f(u(t,x))
+\frac{\partial^2 W(t,x)}{\partial t \partial x},
\quad (t,x)\in (0,T]\times (0,1),
\end{split}
\end{align}
with the following initial value and homogeneous Dirichlet boundary condition:
\begin{align}\label{dbc}
u(t,0)=u(t,1)=0, \
u(0,x)=u_0(x),
\ \ (t,x)\in [0,T]\times (0,1).
\end{align}
Here $f$ satisfies certain monotone condition with polynomial growth derivative (see Assumption \ref{ap-f}).
We remark that if $f(x)=x-x^3$, then Eq. \eqref{ac}-\eqref{dbc} is called the stochastic Allen--Cahn equation or the stochastic Ginzburg--Landau equation, which has been extensively studied mathematically and numerically in literature; see, e.g., \cite{FLP14(SPDE), FLZ17(SINUM), Fun16, KLL15(JAP), LQ17-1, Pro(MOC)} and references cited therein.

For a slightly different version of the stochastic Allen--Cahn equation with space-time white noise, \cite[Theorem 3.1]{YZ17(JMAA)} got a convergence rate in probability sense for spectral Galerkin approximations.
The first result on strong approximations of second order SPDEs with monotone drifts driven by space-time white noise is given in \cite[Corollary 6.17]{BJ(SPA)} for SPDEs with polynomial drifts.
There the authors obtained the strong convergence rate for a temporally semidiscrete nonlinearity-truncated, Euler-type scheme.
Their method was then used in \cite{BGJK17} to a nonlinearity-truncated, fully discrete scheme for the stochastic Allen--Cahn equation with space-time white noise.
The authors proved that
\begin{align} \label{err-jen}
\sup_{0\le m\le M}\|u(t_m)-u_N^m\|_{L^2(\Omega\times (0,1))}
=\mathcal O \big(N^{-\beta}+M^{-\beta/2} \big),
\end{align}
for any $\beta\in (0,1/2)$, where $u_N^m$ denotes the numerical solution and $N,M$ are the dimension of spectral Galerkin and the number of temporal steps, respectively.
The authors in \cite{BCH18(IMA)} analyzed the strong convergence rate of a temporal splitting scheme of the stochastic Allen--Cahn equation with space-time white noise based on the explicit solvability of the phase flow of ${\rm d}u/{\rm d}t=(u-u^3)$, and \cite{Wan18} gave sharp strong convergence rate of a tamed fully discrete exponential integrator for SPDE with cubic nonlinearity and negative leading coefficient.

In this work, we consider more general SPDEs with monotone drifts, which include the stochastic Allen--Cahn equation studied in aforementioned references.
Our strong approximation of Eq. \eqref{ac}-\eqref{dbc} consists of two steps.
The first step is to transform the original stochastic equation \eqref{ac} into an equivalent random equation \eqref{z} whose solution possesses more regularity than the original one.
The spatial spectral Galerkin approximation of Eq. \eqref{ac}-\eqref{dbc} is exactly the sum of the spectral Galerkin approximation of the aforementioned random equation \eqref{z} and the spectral approximate Ornstein--Uhlenbeck process; see \eqref{spe-z}.
Then we use the natural backward Euler scheme \eqref{full} to discretize the random spectral Galerkin approximate equation \eqref{spe-z}.
To derive the strong convergence rate of this fully discrete approximation, we make full use of the monotonicity of the random equation, in combination with the factorization method and stochastic calculus in martingale-type 2 Banach spaces, to derive {\it a priori}
maximum norm estimation and a H\"older-type regularity for the solutions of Eq. \eqref{ac}-\eqref{dbc} and \eqref{z} (see Lemmas \ref{reg} and \ref{reg-hol}).
It has been noted that such stochastic-random transformation was used in \cite[Section 7.2]{DZ14} and references cited therein to mathematically analyze SPDEs driven by additive noise.
We believe that this is the first work that uses such strategy to analyze strong convergence rates of numerical schemes for SPDEs.

Our main result shows that the proposed fully discrete scheme possesses the following convergence rate under the $l_t^\infty L^2_\omega L^2_x \cap l_t^q L^q_\omega L^q_x$-norm for certain $q\ge 2$ and for any $\gamma\in (0,1/2)$ (see Theorem \ref{u-unm}):
\begin{align} \label{err-lq}
\sup_{0\le m\le M}\ee\Big[\|u(t_m)-u_N^m\|^2_{L^2(0,1)} \Big]
& +\frac1M\sum_{m=0}^M \ee\Big[\|u(t_m)-u_N^m\|^q_{L^q(0,1)}\Big]
\nonumber \\
& = \OO \big(N^{-2\gamma}+M^{-1/2} \big).
\end{align}
Taking into account of the optimal Sobolev regularity in Lemma \ref{reg} and a reverse estimation \eqref{rk-ou-err0}, the convergence rate \eqref{err-lq} is sharp.
It should be noted that the proposed scheme is implicit which avoids the truncation or tame of the nonlinearity, and its temporal mean-square convergence order is $1/4$ which removes an infinitesimal factor of \eqref{err-jen} appeared in \cite{BGJK17}.

The rest of this article is organized as follows.
Some preliminaries and {\it a priori} maximum norm estimation and a H\"older-type regularity for the solutions of Eq. \eqref{ac}-\eqref{dbc} and \eqref{z} are given in the next section, followed by the strong convergence analysis for the proposed fully discrete scheme in Section \ref{sec3}.
Several numerical experiments are given to support theoretical claims in the last section.

\section{Preliminaries}
\label{sec2}

In this section, we give some commonly used notations and the optimal spatial Sobolev and temporal H\"older regularity for the solution of Eq. \eqref{ac}-\eqref{dbc}.
They are used in the next section to deduce the sharp strong convergence rate of a fully discrete scheme.

\subsection{Notations}

Let $p\ge 1$, $r\in [1,\infty]$, $q\in [2,\infty]$, $\theta\ge 0$ and $\delta\in [0,1]$.
Here and after we denote $L_x^q:=L_x^q(0,1)$ and $H:=L_x^2$ with norm $\|\cdot\|$ and inner product
$\<\cdot, \cdot\>$.
Similarly, $L_\omega^p$ and $L_t^r$ denote the related Lebesgue spaces on the filtered probability space (also called stochastic basis) $(\Omega,\FFF,(\FFF_t)_{t\in [0,T]},\pp)$ and $(0,T)$, respectively.
For convenience, sometimes we use the temporal, sample path and spatial mixed norm $\|\cdot\|_{L_\omega^p L_t^r L_x^q}$ in different orders, such as
\begin{align} \label{norm}
\|u\|_{L_\omega^p L_t^r L_x^q}
:=\bigg(\int_\Omega \bigg(\int_0^T \bigg(\int_0^1 |u(t,x,\omega)|^q {\rm d}x\bigg)^\frac rq {\rm d}t\bigg)^\frac pr {\rm d}\pp(\omega)\bigg)^\frac 1p
\end{align}
for $u\in L_\omega^p L_t^r L_x^q$,
with the usual modification for $r=\infty$ or $q=\infty$.

Denote by $A$ the Dirichlet Laplacian on either $H$ or $L_x^q$.
Then $A$ is the infinitesimal generator of an analytic $C_0$-semigroup $S(\cdot)$ on $H$ or $L_x^q$, and thus one can define the fractional powers $(-A)^\theta$ of the operator $-A$.
Let $\theta\ge 0$ and $W_x^{\theta,q}$ ($\dot H^\theta:=W_x^{\theta,2}$) be the domain of $(-A)^{\theta/2}$ equipped with the norm $\|\cdot\|_{\theta,q}$
($\|\cdot\|_\theta:=\|\cdot\|_{\theta,2}$):
\begin{align*}
\|u\|_{\theta,q}
:=\|(-A)^{\theta/2} u\|_{L_x^q},
\quad u\in W_x^{\theta,q}.
\end{align*}

For a Banach space $(B,\|\cdot\|_B)$ and a bounded closed subset
$\OOO\subset \rr^d$, we use $\CC(\OOO; B)$ to denote the Banach space consisting of $B$-valued continuous functions $f$ such that
$\|f\|_{\CC(\OOO; B)}:=\sup_{x\in \OOO} \|f(x)\|_B<\infty$, and
$\CC^\delta(\OOO; B)$ with $\delta\in (0,1]$ to denote the $B$-valued function $f$ such that
\begin{align*}
\|f\|_{\CC^\delta(\OOO; B)}
:=\sup_{x \in \OOO} \|f(x)\|_B+\sup_{x,y\in \OOO, x\neq y}\frac{\|f(x)-f(y)\|_B}{|x-y|^\delta}<\infty.
\end{align*}
In the following, when $B=\rr$ and $\OOO=[0,1]$ we simply denote
$\CC^\delta([0,1]; \rr)=\CC^\delta$.
Similarly, we use $L^p(\Omega;\CC([0,T]; B))$ to denote the Banach space consisting of $B$-valued a.s. continuous stochastic processes $u=\{u(t):\ t\in [0,T]\}$ such that
\begin{align*}
\|u\|_{L^p(\Omega;\CC([0,T]; B))}
:=\bigg(\ee\bigg[\sup_{t\in [0,T]} \|u(t)\|_B^p \bigg]\bigg)^\frac1p
<\infty,
\end{align*}
and
$L^p(\Omega;\CC^\delta([0,T]; B))$ with $\delta\in (0,1]$ to denote $B$-valued stochastic processes $u=\{u(t):\ t\in [0,T]\}$ such that
\begin{align*}
\|u\|_{L^p(\Omega;\CC^\delta([0,T]; B))}
: & =\bigg(\ee\bigg[\sup_{t\in [0,T]} \|u(t)\|_B^p\bigg]\bigg)^\frac1p \\
&\quad +\bigg(\ee\bigg[\bigg(\sup_{t,s\in [0,T], t\neq s}\frac{\|u(t)-u(r)\|_B}{|t_2-t_1|^\delta}\bigg)^p\bigg]\bigg)^\frac1p<\infty.
\end{align*}

The main condition on the nonlinear function $f$ is the following monotone-type assumption.

\begin{ap} \label{ap-f}
There exist constants $b\in \rr$, $L_f,\widetilde{L_f}>0$ and $q\ge 2$ such that
\begin{align}
(f(x)-f(y)) (x-y) \le b |x-y|^2-L_f |x-y|^q,&
\quad x, y\in \rr;  \label{con-f} \\
 |f(0)|<\infty,\quad
|f'(x)| \le \widetilde{L_f} (1+|x|^{q-2}),&
\quad x \in \rr.  \label{con-f'}
\end{align}
\end{ap}

It is clear from \eqref{con-f'} that $f$ grows at most polynomially of degree
$(q-1)$ by the mean value theorem:
\begin{align} \label{con-f1}
|f(x)| \le C(1+|x|^{q-1}),
\quad x\in \rr,
\end{align}
where $C=C(|f(0)|,\widetilde{L_f})$ is a positive constant.
A motivated example of $f$ such that Assumption \ref{ap-f} holds true is a polynomial of odd degree $(q-1)$ with negative leading coefficient perturbed with a Lipschitz continuous function; see, e.g., \cite[Exmple 7.8]{DZ14}.

In order to apply the theory of stochastic analysis in infinite dimensional settings, we need to transform the original SPDE \eqref{ac} into an infinite dimensional stochastic evolution equation.
To this end, let us define $F: L_x^{q'} \rightarrow L_x^q$ by the Nemytskii operators associated with $f$:
\begin{align*}
F(u)(x):=f(u(x)), \quad x\in [0,1].
\end{align*}
where $q'$ denote the conjugation of $q$, i.e., $1/q'+1/q=1$.
Then by Assumption \ref{ap-f}, the operator $F$ has a continuous extension from $L^{q'}_x$ to $L^q_x$ and satisfies
\begin{align} \label{con-F}
_{L^{q'}_x}\<F(x)-F(y), x-y\>_{L^q_x}
\le b \|x-y\|^2-L_f \|x-y\|^q_{L^q_x},
\quad x,y\in L^q_x,
\end{align}
where $_{L^{q'}_x} \<\cdot, \cdot\>_{L^q_x}$ denotes the dual between $L^{q'}_x$ and $L^q_x$.

Denote by $W_H$ the $H$-valued cylindrical Wiener process in the stochastic basis 
$(\Omega,\FFF,(\FFF_t)_{t\in [0,T]},\pp)$, i.e., there exists an orthonormal basis $\{h_k\}_{k=1}^\infty $ of $H$ and a sequence of mutually independent Brownian motions $\{\beta_k\}_{k=1}^\infty $ such that
\begin{align}\label{wiener}
W_H(t)=\sum_{k=1}^\infty h_k\beta_k(t),\quad t\in [0,T].
\end{align}
Then Eq. \eqref{ac}-\eqref{dbc} is equivalent to the following stochastic evolution equation:
\begin{align}\label{AC} \tag{SACE}
{\rm d}u(t)=(Au(t)+F(u(t))) {\rm d}t+{\rm d}W_H(t),
\ t\in (0,T];
\quad u(0)=u_0.
\end{align}

Note that for any $q\ge 2$ and $\theta\ge 0$, the function space $W_x^{\theta,q}$ is a martingale-type $2$ Banach space.
We need the following Burkholder inequality in martingale-type $2$ Banach space (see, e.g., \cite[Theorem 2.4]{Brz97(SSR)}):
\begin{align}\label{bdg}
\bigg\|\int_{0}^t \Phi(r){\rm d}W_H(r)\bigg\|_{L_\omega^p L_t^\infty L_x^q}
\le C \big\|\Phi \big\|_{L^p(\Omega; L^2(0,T; \gamma(H,L_x^q)))},
\end{align}
for $p,q\ge 2$, where $\gamma(H,L_x^q)$ denotes the radonifying operator norm:
\begin{align*}
\|\Phi\|_{\gamma(H,L_x^q)}
:=\bigg\|\sum_{k=1}^\infty\gamma_k \Phi h_k\bigg\|_{L^2(\Omega';L_x^q)}.
\end{align*}
Here $\{h_k\}_{k=1}^\infty$ is any orthonormal basis of $H$ and
$\{\gamma_n\}_{n\geq 1}$ is a sequence of independent
$\mathcal N(0,1)$-random variables on a probability space $(\Omega',\FFF',\pp')$, provided that the above series converges.
We also note that $L_x^q$ with $q\ge 2$ is a Banach function space with finite cotype, and then $\Phi\in \gamma(H;L_x^q)$ if and only if $(\sum_{k=1}^\infty (\Phi h_k)^2)^{1/2}$ belongs to $L_x^q$ for any orthonormal basis $\{h_k\}_{k=1}^\infty$ of $H$; see \cite[Lemma 2.1]{NVW08(JFA)}.
Moreover, in this situation,
\begin{align}\label{cotype}
\|\Phi\|^2_{\gamma(H;L_x^q)}
&\simeq \bigg\|\sum_{k=1}^\infty (\Phi h_k)^2\bigg\|_{L_x^{q/2}},
\quad \Phi\in \gamma(H;L_x^q).
\end{align}

For convenience, we frequently use the generic constant $C$, which may be different in each appearance and is independent of the discrete parameters $N$ and $M$ or equivalently, $\tau$, respectively.

\subsection{{\it A Priori} Estimation}

Recall that a predictable stochastic process $u:[0,T]\times \Omega\rightarrow H$  is called a mild solution of Eq. \eqref{AC} if $u\in L^\infty(0,T; H)$ a.s. such that
\begin{align} \label{mild}
u(t)=S(t)u_0+\int_0^t S(t-r) F(u(r)){\rm d}r+W_A(t), \quad a.s.
\quad t\in [0,T],
\end{align}
where $S=\{S(t):=e^{A t}:\ t\in [0,T]\}$ is the analytic $\CC_0$-semigroup generalized by $A$,
$$S*F(u)=\bigg\{\int_0^t S(t-r) F(u(r)){\rm d}r:\ t\in [0,T]\bigg\}$$
is the deterministic convolution and
$$W_A=\bigg\{W_A(t)=\int_0^t S(t-r) {\rm d}W_H(r):\ t\in [0,T]\bigg\}$$
is the so-called Ornstein--Uhlenbeck process.
The uniqueness of the mild solution of Eq. \eqref{AC} is understood in the sense of stochastic equivalence.

Set $z(t):=u(t)-W_A(t)$, $t\in [0,T]$.
Then it is clear that $u$ is the unique solution of Eq. \eqref{AC} if and only if $z$ is the unique mild solution of the following random partial differential equation:
\begin{align}\label{z}
\dot z(t)=A z(t)+F(z(t)+W_A(t)), \quad t\in [0,T]; \quad
z(0)=u_0.
\end{align}
The mild solution of the above equation is equivalent to its variational solution (see, e.g., \cite[Theorem 5.4]{DZ14}), i.e.,
for any subdivision $\{0=t_0<t_1<\cdots<t_m<t_{m+1}<\cdots<t_M=T\}$ with $M\in \nn_+$ of the time interval $[0,T]$ and $v\in \dot H^1$ it holds a.s. that
\begin{align}\label{z-var}
\<z(t_{m+1})-z(t_m), v\>
+\int_{t_m}^{t_{m+1}} \<\nabla z, \nabla v\> {\rm d}r
=\int_{t_m}^{t_{m+1}} \<F(u), v\> {\rm d}r,
\end{align}
for any $m\in \zz_{M-1}:=\{0,1,\cdots,M-1\}$.

The existence of a unique mild solution of Eq. \eqref{mild} under the monotone condition \eqref{con-F}, and thus Eq. \eqref{ac}-\eqref{dbc} under Assumption \ref{ap-f}, had been established in \cite[Theorem 7.17]{DZ14}.
We will give a uniform moments' estimation of this solution in Lemma \ref{reg} with the aforementioned monotone condition \eqref{con-F} following some ideas of \cite[Proposition 2.1]{LQ17-1}.
For simplicity, we assume that the initial datum $u_0$ is a deterministic function; the case of random $u_0$ possessing certain bounded $p$-moments can also be handled by similar arguments as in \cite[Proposition 2.1]{LQ17-1}.

As in \cite[Lemma 2.1]{LQ17-1} where we have shown that the Sobolev and H\"older regularity of the Ornstein--Uhlenbeck process $W_A$, our main tool is the following factorization formula which is valid by deterministic and stochastic Fubini theorems:
\begin{align*}
S*F(u)(t)
&=\int_0^t S(t-r) F(u(r)) {\rm d}r
=\frac{\sin(\pi \alpha)}{\pi} \int_0^t (t-r)^{\alpha-1} S(t-r) F_\alpha(r) {\rm d}r, \\
W_A(t)
&=\int_0^t S(t-r) {\rm d}W_H(r)
=\frac{\sin(\pi \alpha)}{\pi} \int_0^t (t-r)^{\alpha-1} S(t-r) W_\alpha(r) {\rm d}r,
\end{align*}
where $\alpha\in (0,1)$ and
\begin{align*}
F_\alpha(t)
:&=\int_0^t (t-r)^{-\alpha} S(t-r) F(u(r)) {\rm d}r, \\
W_\alpha(t)
:&=\int_0^t (t-r)^{-\alpha} S(t-r) {\rm d}W_H(r),
\quad t\in [0,T].
\end{align*}
It was proved in \cite[Lemma 3.3]{Brz97(SSR)} that, when $p>1$ and $1/p<\alpha<1$, the linear operator $R_\alpha$ defined by
\begin{align*}
R_\alpha f(t)
:=\int_0^t (t-r)^{\alpha-1} S(t-r) f(r) {\rm d} r,\quad t\in [0,T],
\end{align*}
is bounded from $L^p(0,T; L_x^q)$ to $\CC^{\delta}([0,T]; W_x^{\theta,q})$ with $\delta<\alpha-1/p$ when $\theta=0$ or $\delta=\alpha-1/p-\theta/2$ when
$\theta>0$ and $\alpha>\theta/2+1/p$.

\begin{lm} \label{reg}
Let $\beta\in (0,1/2)$.
Assume that $u_0\in \dot H^\beta \cap L_x^\infty$.
Then for any $p\ge 1$, there exists a constant $C=C(T,p,b,q,L_f,\beta)$ such that
\begin{align} \label{bou}
& \|u\|_{L_\omega^p L_t^\infty L_x^\infty}
+\|u\|_{L_\omega^p L_t^\infty \dot H^\beta}
+\|z\|_{L_\omega^p L_t^\infty L_x^\infty}
+\|z\|_{L_\omega^p L_t^\infty \dot H^\beta} \nonumber \\
& \le C\Big(1+\|u_0\|_{L_x^\infty}^{q-1}+\|u_0\|^{q-1}_{\dot H^\beta} \Big),
\end{align}
and that
\begin{align}\label{hol}
\|u(t)-u(s)\|_{L^p(\Omega; H)}
\le C|t-s|^{\beta/2}, \quad t,s\in [0,T].
\end{align}
Moreover, if $u_0\in \dot H^{1/2} \cap L_x^\infty$.
Then
\begin{align}\label{hol+}
\|u(t)-u(s)\|_{L^p(\Omega; H)}
\le C|t-s|^{1/4}, \quad t,s\in [0,T].
\end{align}
\end{lm}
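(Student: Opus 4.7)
The plan is to exploit the decomposition $u=z+W_A$ from \eqref{y}, where $z$ extracts the rough white-noise singularity. The Ornstein--Uhlenbeck bounds $\|W_A\|_{L_\omega^p L_t^\infty L_x^\infty}+\|W_A\|_{L_\omega^p L_t^\infty \dot H^\beta}<\infty$ for every $p\ge 1$ and $\beta<1/2$ follow from \cite[Lemma 2.1]{LQ17-1} via the factorization formula $W_A = c_\alpha R_\alpha W_\alpha$ already discussed in the text. Hence it suffices to propagate pathwise bounds to $z$ through the deterministic-looking random PDE \eqref{y}, and then recover the bounds on $u$ by the triangle inequality.

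The core step is a pathwise $L_x^\infty$ estimate on $z$. Testing $\dot z = A z + F(z+W_A)$ against $|z|^{2p-2}z$ for arbitrary $p\ge 1$ yields
\begin{equation*}
\frac{1}{2p}\frac{d}{dt}\|z\|_{L_x^{2p}}^{2p} + (2p-1)\int_0^1 |\nabla z|^2 |z|^{2p-2}\,dx = \int_0^1 F(z+W_A)\,|z|^{2p-2}z\,dx.
\end{equation*}
I would split the right-hand side as $\int (F(z+W_A)-F(W_A))|z|^{2p-2}z\,dx + \int F(W_A)|z|^{2p-2}z\,dx$, bound the first piece pointwise through \eqref{con-F} by $b|z|^{2p}-L_f|z|^{2p+q-2}$, and control the second via Young's inequality together with the polynomial growth \eqref{con-f1} and the $L_x^\infty$ bound on $W_A$, with the dissipative term $-L_f\int|z|^{2p+q-2}\,dx$ absorbing the resulting $|z|^{2p+q-2}$ contribution. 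A Gronwall argument then yields a bound of the form $\|z(t)\|_{L_x^{2p}}^{2p} \le C\bigl(1+\|u_0\|_{L_x^{2p}}^{2p(q-1)} + \|W_A\|_{L_t^\infty L_x^\infty}^{2p(q-1)}\bigr)$ with constants uniform enough in $p$ that sending $p\to\infty$ delivers the pathwise $L_x^\infty$ bound on $z$. Taking $L_\omega^p$ norms produces the $L^\infty$ portion of \eqref{bou}; for the $\dot H^\beta$ part, I would invoke the mild representation $z(t) = S(t)u_0 + \int_0^t S(t-r) F(u(r))\,dr$, the smoothing estimate $\|(-A)^{\beta/2} S(\sigma)\|_{\LL(H,H)} \le C\sigma^{-\beta/2}$, and the $L^\infty$ bound just established through $|F(u)|\le C(1+|u|^{q-1})$.

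For the H\"older regularity \eqref{hol}--\eqref{hol+}, I decompose $u(t)-u(s)$ via \eqref{mild} as $(S(t-s)-I)u(s) + \int_s^t S(t-r) F(u(r))\,dr + \int_s^t S(t-r)\,dW_H(r)$. The first term is bounded by $C|t-s|^{\beta/2}\|u(s)\|_{\dot H^\beta}$ using $\|(-A)^{-\beta/2}(S(\sigma)-I)\|_{\LL(H,H)} \le C\sigma^{\beta/2}$ and \eqref{bou}; the second by $|t-s|\cdot\|F(u)\|_{L_t^\infty H}$, again controlled by \eqref{bou} together with polynomial growth of $F$. For the stochastic convolution increment, I invoke \eqref{bdg} together with the radonifying norm bound $\|S(\sigma)\|_{\gamma(H,H)}\le C\sigma^{-1/4}$ valid in one spatial dimension, producing the sharp $|t-s|^{1/4}$ exponent in $L_\omega^p$. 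Since $\beta<1/2$ gives $\beta/2<1/4$, the first term dominates in general, yielding \eqref{hol}; for $u_0\in \dot H^{1/2}$ all three contributions are of order $|t-s|^{1/4}$, giving \eqref{hol+}.

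The hard part will be the uniform-in-$p$ closure of the $L_x^{2p}$ estimate in the core step: the interplay between the monotone dissipation $-L_f|z|^{q+2p-2}$ and the source $F(W_A)|z|^{2p-1}$ must be managed so that the Gronwall constant grows at most polynomially in $p$ as $p\to\infty$. A rigorous justification via Galerkin approximation of the random PDE \eqref{y} will also be needed to employ $|z|^{2p-2}z$ as a legitimate test function and to handle the non-smooth initial data $u_0\in \dot H^\beta\cap L_x^\infty$ cleanly.
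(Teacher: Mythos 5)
Your argument for the uniform bound \eqref{bou} takes a genuinely different route from the paper's: you run a Moser-type iteration, sending $p\to\infty$ in the $L_x^{2p}$ energy estimate for $z$ to obtain a pathwise sup-norm bound, whereas the paper keeps the exponent $2L$ finite, uses the resulting moment bounds only to control $F(u)$ in $L_x^{2L}$, and then recovers the $L_x^\infty$ and $\dot H^\beta$ bounds for the convolution $S*F(u)$ through the factorization operator $R_\alpha$ and the Sobolev embedding $W_x^{\theta,2L}\hookrightarrow L_x^\infty$. Your route can be closed, but only if the absorption you mention is done carefully: bounding $\int F(W_A)|z|^{2p-2}z$ by Young's inequality against the dissipation $-L_f\|z\|_{L_x^{2p+q-2}}^{2p+q-2}$ leaves a source of size $K^{(2p+q-2)/(q-1)}$ with $K\simeq 1+\|W_A\|_{L_t^\infty L_x^\infty}^{q-1}$, and after taking the $2p$-th root and letting $p\to\infty$ this contributes only $K^{1/(q-1)}\lesssim 1+\|W_A\|_{L_t^\infty L_x^\infty}$, which has moments of every order. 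A naive Gronwall bound without this absorption would instead leave a factor $e^{CKT}$, whose $L_\omega^p$ moments diverge for $q>3$, so the step you flag as "the hard part" is indeed where the argument lives, and it does go through.

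There is, however, a genuine gap in your proof of \eqref{hol+}. The decomposition $u(t)-u(s)=(S(t-s)-I)u(s)+\int_s^t S(t-r)F(u(r))\,{\rm d}r+\int_s^t S(t-r)\,{\rm d}W_H(r)$ requires, for the exponent $1/4$, the estimate $\|(S(t-s)-I)u(s)\|\le C|t-s|^{1/4}\|u(s)\|_{1/2}$ and hence a uniform moment bound on $\|u(s)\|_{\dot H^{1/2}}$. This is unavailable no matter how smooth $u_0$ is: the Ornstein--Uhlenbeck component satisfies $\ee\big[\|W_A(s)\|_{1/2}^2\big]=\sum_{k\ge1}(2\sqrt{\lambda_k})^{-1}(1-e^{-2\lambda_k s})=\infty$, so $u(s)\notin\dot H^{1/2}$ almost surely, and \eqref{bou} only reaches $\beta<1/2$. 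The paper avoids this endpoint obstruction by taking increments of the three terms of the mild formula \eqref{mild} separately: $(S(t)-S(s))u_0$ uses only the assumption $u_0\in\dot H^{1/2}$, the increment of $S*F(u)$ is H\"older of any order below $1$ by the factorization bound, and the increment $W_A(t)-W_A(s)$ is estimated directly via the It\^o isometry, $\ee\big[\|W_A(t)-W_A(s)\|^2\big]\le C(t-s)^{1/2}$, upgraded to $L^p_\omega$ by Gaussianity. To repair your argument you would need to peel $W_A(s)$ off before applying $S(t-s)-I$, at which point you recover exactly the paper's decomposition; for \eqref{hol} with $\beta<1/2$ your version is fine as written since there $\|u(s)\|_{\dot H^\beta}$ is controlled by \eqref{bou}.
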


\begin{proof}
For the initial term in Eq. \eqref{mild}, by the property of the semigroup $S$,
\begin{align}
\|S(t)u_0\|_{L_x^\infty}+\|S(t)u_0\|_{\beta}
&\le C(\|u_0\|_{L_x^\infty}+\|u_0\|_{\beta}), \label{est-x0} \\
\|S(t)u_0-S(s)u_0\|
&\le C |t-s|^{\beta/2} \|u_0\|_{\beta}. \label{hol-x0}
\end{align}
Let $p,q\ge 2$ and $t\in (0,T]$.
Applying Fubini theorem and the Burkholder inequality \eqref{bdg}, we have
\begin{align*}
\big\| W_\alpha \big\|^p_{L_\omega^p L_t^p L_x^q}
&=\int_0^T \ee\bigg[\bigg\| \int_0^t (t-r)^{-\alpha} S(t-r) {\rm d}W_H(r)\bigg\|_{L_x^q} ^p \bigg]  {\rm d}t  \\
&\le C \int_0^T \bigg( \int_0^t r^{-2\alpha}
\|S(r)\|^2_{\gamma(H;L_x^q)} {\rm d} r \bigg)^\frac p2 {\rm d}t.
\end{align*}
Then by \eqref{cotype} and the uniform boundedness of $\{e_k=\sqrt 2\sin(k\pi\cdot)\}_{k=1}^\infty$, we get
\begin{align*}
\|S(t)\|^2_{\gamma(H;L_x^q)}
&\simeq \bigg\|\sum_{k=1}^\infty (S(t) e_k)^2\bigg\|_{L_x^{q/2}}
\le \sum_{k=1}^\infty e^{-2\lambda_k t} \|e_k\|^2_{L_x^q}
\le C t^{-\frac12},
\end{align*}
where the elementary inequality
$\sum_{k=1}^\infty e^{-2\lambda_k t}
\le C t^{-\frac12}$ is used.
Then
\begin{align*}
\big\| W_\alpha \big\|_{L_\omega^p L_t^p  L_x^q}
&\le C \bigg(\int_0^T \bigg( \int_0^t r^{-(2\alpha+\frac12)} {\rm d} r \bigg)^\frac p2 {\rm d}t\bigg)^\frac1p,
\end{align*}
which is finite if and only if $\alpha\in (0,1/4)$.
As a result of the H\"older continuity characterization, $W_A \in L^p(\Omega; \CC^{\delta}([0,T]; W_x^{\theta,q}))$ for any $\delta,\theta\ge 0$ with
$\delta+\theta/2<1/4$.
By the Sobolev embedding
$W_x^{\theta,q} \hookrightarrow L_x^\infty
\cap \dot H^\beta$ with sufficiently large $q$ and $\beta\le \theta<1/2$, we conclude that
\begin{align}
& \ee\Big[\sup_{t\in [0,T]} \| W_A(t) \|^p_{L_x^\infty}\Big]
+\ee\Big[\sup_{t\in [0,T]} \| W_A(t) \|^p_{\beta}\Big]
\le C, \label{reg-w} \\
& \|W_A(t)-W_A(s)\|_{L^p(\Omega; H)}
\le C|t-s|^\gamma, \quad t,s\in [0,T],   \label{hol-wa}
\end{align}
for any $p\ge 1$, $\beta\in (0,1/2)$ and $\gamma\in (0,1/4)$.

In terms of \eqref{reg-w} and the relation $z=u-W_A$, to show the estimation \eqref{bou} for $u$ and $z$ it suffices to show one of them.
Let $L\ge 1$.
Testing both sides of Eq. \eqref{z} by $|z|^{2(L-1)} z$ and integrating by parts yield that
\begin{align*}
& \frac1{2L} \|z(t)\|^{2L}_{L^{2L}_x}
+(2L-1) \int_0^t \< |z(r)|^{2(L-1)}, |\nabla z(r)|^2 \> {\rm d}r   \\
&=\frac1{2L} \|u_0\|^{2L}_{L^{2L}_x}
+\int_0^t \<(F(u(r)), |z(r)|^{2(L-1)} z(r)\> {\rm d}r.
\end{align*}
It follows from the condition \eqref{con-f} and Young inequality that
\begin{align*}
& \int_0^t \<(F(u(r)), |z(r)|^{2(L-1)} z(r) \> {\rm d}r \\
&=\int_0^t \<F(z(r)+W_A(r))-F(W_A(r)), z^{2L-1}(r)\> {\rm d}r
-\int_0^t \< W_A(r), z^{2L-1}(r)\> {\rm d}r \\
&\le C \int_0^t \|z(r)\|^{2L}_{L^{2L}_x} {\rm d}r
-L_f \int_0^t \|u(r)\|^{q+2(L-1)}_{L^{q+2(L-1)}_x}  {\rm d}r
+C \int_0^t \|W_A(r)\|^{2L}_{L^{2L}_x} {\rm d}r.
\end{align*}
Thus we obtain
\begin{align*}
& \frac1{2L} \|z(t)\|^{2L}_{L^{2L}_x}
+L_f \int_0^t \|u(r)\|^{q+2(L-1)}_{L^{q+2(L-1)}_x} {\rm d}r \\
&\le \frac1{2L} \|u_0\|^{2L}_{L^{2L}_x}
+C \int_0^t \|z(r)\|^{2L}_{L^{2L}_x} {\rm d}r
+C \int_0^t \|W_A(r)\|^{2L}_{L^{2L}_x} {\rm d}r.
\end{align*}
Now taking $L^1_\omega L^\infty_t$-norm, we conclude from Gr\"onwall inequality and \eqref{reg-w} that
\begin{align*}
\ee\Big[\sup_{t\in [0,T]} \|z(t)\|^{2L}_{L^{2L}_x} \Big]
+\int_0^T \ee\Big[\|z(r)\|^{q+2(L-1)}_{L^{q+2(L-1)}_x}\Big] {\rm d}t
\le C \Big(1+\|u_0\|^{2L}_{L^{2L}_x} \Big).
\end{align*}
Similarly, one gets by taking $L^{p/2}_\omega L^\infty_t$-norm with general $p\ge 2$ and the relation $z=u-W_A$ that
\begin{align} \label{mom}
\ee\Big[\sup_{t\in [0,T]} \|u(t)\|^p_{L^{2L}_x} \Big]
+\ee\Big[\sup_{t\in [0,T]} \|z(t)\|^p_{L^{2L}_x} \Big]
\le C \Big(1+\|u_0\|^p_{L^{2L}_x}  \Big),
\quad p\ge 2.
\end{align}

Consequently, for any $\alpha\in (0,1)$ we get
\begin{align*}
\|F_\alpha\|^p_{L_\omega^p L_t^p L_x^{2L}}
&\le \int_0^T \ee\bigg[ \bigg(\int_0^t (t-r)^{-\alpha}
\|S(t-r) F(u(r))\|_{L_x^{2L}} {\rm d} r\bigg)^p \bigg] {\rm d}t \\
&\le C\Big( 1+\|u\|^{p(q-1)}_{L_\omega^{p (q-1)} L_t^\infty L_x^{2L(q-1)}}\Big)
\le C \Big( 1+\|u_0\|^{p(q-1)}_{L_x^{2L(q-1)}}\Big).
\end{align*}
Therefore, $S*F(u)\in L^p(\Omega; \CC^{\delta}([0,T]; W_x^{\theta,2L}))$ for any $\delta,\theta\ge 0$ with
$\delta+\theta/2<1$.
Then by Sobolev embedding we have
\begin{align} \label{est-f1}
&\ee\Big[\sup_{t\in [0,T]} \|S*F(u)(t) \|^p_{L_x^\infty}\Big]
+ \ee\Big[\sup_{t\in [0,T]} \|S*F(u)(t)\|^p_{\beta}\Big]
\le C,
\end{align}
and
\begin{align}
& \big\|S*F(u)(t)-S*F(u)(s) \big\|_{L^p(\Omega; H)}
\le C|t-s|^\gamma, \quad t,s\in [0,T],   \label{hol-f1}
\end{align}
for any $p\ge 1$, $\beta\in (0,1/2)$ and $\gamma\in (0,1)$.

Combining \eqref{est-x0}-\eqref{est-f1} and the relation that $u=z+W_A$, we get the estimations \eqref{bou} and \eqref{hol}.
To show the last inequality \eqref{hol+}, we only need to give a refined estimation of \eqref{hol-wa}:
\begin{align} \label{hol-wa+}
\|W_A(t)-W_A(s)\|_{L^p(\Omega;H)}
\le C|t-s|^{1/2}, \quad t,s\in [0,T].
\end{align}
Due to the fact that $W_A$ is Gaussian, we only need to show \eqref{hol-wa+} for $p=2$.
Without loss of generality, assume that $0\le s\le t\le T$.
By It\^o isometry, we have
\begin{align*}
& \ee\Big[\|W_A(t)-W_A(s)\|^2\Big]  \\
&=\ee\bigg[\bigg\|\int_s^t S(t-r) {\rm d}W_H(r)\bigg\|^2\bigg]
+\ee\bigg[\bigg\|\int_0^s (S(t-r)-S(s-r)) {\rm d}W_H(r)\bigg\|^2\bigg] \\
&=\int_0^{t-s} \bigg[\sum_{k=1}^\infty e^{-2\lambda_k r}\bigg] {\rm d}r
+\sum_{k=1}^\infty \frac{1-e^{-2\lambda_k s}}{2\lambda_k}
\Big(1-e^{-\lambda_k(t-s)}\Big)^2 \\
&\le \int_0^{t-s} \bigg[\sum_{k=1}^\infty e^{-2\lambda_k r}\bigg] {\rm d}r
+\frac12 \sum_{k=1}^\infty \frac{1-e^{-\lambda_k(t-s)}}{\lambda_k} \\
&= \int_0^{t-s} \bigg[\sum_{k=1}^\infty e^{-2\lambda_k r}\bigg] {\rm d}r
+\frac12 \int_0^{t-s} \bigg[\sum_{k=1}^\infty e^{-\lambda_k r}\bigg]  {\rm d}r
\le C(t-s).
\end{align*}
This completes the proof of \eqref{hol-wa+}.
\end{proof}

Next, we use the uniform estimation in Lemma \ref{reg} to derive the following H\"older-type regularity of the solutions $u$ and $z$ of Eq. \eqref{ac}-\eqref{dbc} and \eqref{z}, respectively.

\begin{lm} \label{reg-hol}
Let $\beta\in (0,1/2]$.
Assume that $u_0\in \dot H^\beta\cap L_x^\infty$.
Then for any $p\ge 1$, there exists a constant $C=C(T,p,b,L_f,\beta,u_0)$ such that for any $0\le s\le t\le T$ there holds that
\begin{align} \label{hol-f}
\ee\Big[\|F(u(t))-F(u(s)\|^2\Big]
\le C (t-s)^\beta.
\end{align}
Moreover, if $u_0\in \dot H^{1+\beta}$, then
\begin{align}\label{hol-y}
\ee\Big[\|\nabla z(t)-\nabla z(s)\|^2\Big]
\le C (t-s)^\beta.
\end{align}
\end{lm}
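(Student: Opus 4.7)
The plan is to reduce both inequalities to the a priori bounds established in Lemma \ref{reg}, using the polynomial growth of $f'$ from Assumption \ref{ap-f} for \eqref{hol-f} and the mild formulation of $z$ together with smoothing properties of $S$ for \eqref{hol-y}. The main tools will be the standard semigroup inequalities $\|(-A)^{\alpha}S(r)v\|\le C r^{-\alpha}\|v\|$ and $\|(-A)^{-\alpha}(S(r)-I)v\|\le C r^{\alpha}\|v\|$ for $\alpha\ge 0$ and $v\in H$, together with the fact that, thanks to the growth bound \eqref{con-f1} and the $L_x^\infty$ moment estimate in \eqref{bou}, the source satisfies $\|F(u(r))\|_{L^2(\Omega;H)}\le C$ uniformly in $r\in[0,T]$.

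For \eqref{hol-f}, I will first apply the mean value theorem together with \eqref{con-f'} to obtain the pointwise inequality
\[
|f(u(t,x))-f(u(s,x))|\le \widetilde{L_f}\bigl(1+|u(t,x)|^{q-2}+|u(s,x)|^{q-2}\bigr)|u(t,x)-u(s,x)|.
\]
Squaring, integrating in $x\in(0,1)$, and pulling the $L_x^\infty$ factors out of the spatial integral gives
\[
\|F(u(t))-F(u(s))\|^2\le C\bigl(1+\|u(t)\|_{L_x^\infty}^{2(q-2)}+\|u(s)\|_{L_x^\infty}^{2(q-2)}\bigr)\|u(t)-u(s)\|^2.
\]
Taking expectation and applying the Cauchy--Schwarz inequality decouples the two factors; the first is uniformly bounded by \eqref{bou}, while the second equals $\ee[\|u(t)-u(s)\|^4]^{1/2}\le C(t-s)^{\beta}$, which follows from \eqref{hol} with $p=4$ when $\beta<1/2$ and from the refined estimate \eqref{hol+} when $\beta=1/2$.

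For \eqref{hol-y}, I will start from the mild formula $z(t)=S(t)u_0+\int_0^t S(t-r)F(u(r))\,{\rm d}r$ and decompose, for $0\le s<t\le T$,
\begin{align*}
\nabla z(t)-\nabla z(s)
&=(S(t)-S(s))\nabla u_0 + \int_0^s \nabla(S(t-r)-S(s-r))F(u(r))\,{\rm d}r \\
&\quad + \int_s^t \nabla S(t-r)F(u(r))\,{\rm d}r
=: I_1+I_2+I_3.
\end{align*}
The term $I_1$ is handled by the commutation of $S(\cdot)$ with $(-A)^{1/2}$ and the bound $\|(S(t)-S(s))w\|\le C(t-s)^{\beta/2}\|w\|_{\beta}$, applied with $w=\nabla u_0$, giving $\|I_1\|\le C(t-s)^{\beta/2}\|u_0\|_{1+\beta}$. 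For $I_3$, the smoothing bound $\|\nabla S(t-r)v\|\le C(t-r)^{-1/2}\|v\|$ combined with the uniform $L^2(\Omega;H)$ bound on $F(u(r))$ gives $\|I_3\|_{L^2(\Omega;H)}\le C(t-s)^{1/2}\le C\,T^{(1-\beta)/2}(t-s)^{\beta/2}$. The delicate term is $I_2$: using the factorization $\nabla(S(t-r)-S(s-r))=(-A)^{1/2}S(s-r)(S(t-s)-I)$ together with the two smoothing bounds applied with exponent $\alpha=\beta/2$, I obtain
\[
\|\nabla(S(t-r)-S(s-r))v\|\le C(s-r)^{-(1+\beta)/2}(t-s)^{\beta/2}\|v\|,
\]
after which the uniform $L^2(\Omega;H)$ bound on $F(u(r))$ and the integrability of $(s-r)^{-(1+\beta)/2}$ on $[0,s]$ (guaranteed by $\beta\le 1/2<1$) yield $\|I_2\|_{L^2(\Omega;H)}\le C(t-s)^{\beta/2}$. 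Squaring and combining the three contributions proves \eqref{hol-y}.

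The principal obstacle will be the control of $I_2$: one must extract the desired H\"older exponent $\beta/2$ in $(t-s)$ while simultaneously keeping the temporal singularity at $r=s$ integrable. The restriction $\beta\le 1/2$ is exactly what makes both requirements compatible through the fractional smoothing of $S$, since it keeps the exponent $(1+\beta)/2$ strictly below one.
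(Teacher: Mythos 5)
Your proposal is correct and follows essentially the same route as the paper: mean value theorem plus the moment bounds \eqref{bou} and the H\"older estimates \eqref{hol}--\eqref{hol+} for \eqref{hol-f}, and the same three-term splitting of $\nabla z(t)-\nabla z(s)$ (initial term, integral over $[0,s]$ with the semigroup difference, integral over $[s,t]$) handled by the smoothing estimates for \eqref{hol-y}. Your treatment of the $[0,s]$ term is in fact slightly more careful than the paper's, since you take the fractional exponent $\beta/2<1/2$ so that the singularity $(s-r)^{-(1+\beta)/2}$ stays integrable, whereas the paper works at the borderline exponent.
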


\begin{proof}
We start with the first estimation \eqref{hol-f}.
By the mean value theorem, the condition \eqref{con-f'}, the moments' estimation \eqref{bou} and H\"older-type regularity \eqref{hol}-\eqref{hol+} of $u$, we get
\begin{align*}
& \ee\Big[\|F(u(t))-F(u(s)\|^2\Big] \\
& \le C \bigg(1+\sup_{t\in [0,T]} \|u(t) \|^{2(q-2)}_{L_\omega^{2(q-1)} L_x^\infty} \bigg) \times
 \|u(t)-u(s) \|^2_{L_\omega^{2(q-1)} H}   \\
&\le C (t-s)^\beta,
\end{align*}
which proves \eqref{hol-f}.

Next, we prove the last inequality \eqref{hol-y}.
By the smoothness property of the semigroup $S$ and the regularity of $u_0$, we get
\begin{align} \label{reg-x0}
\|S(t)u_0-S(r)u_0\|_1
\le C \|u_0\|_{1+\beta} (t-s)^{\beta/2},
\quad u_0\in \dot H^{1+\beta}.
\end{align}
By Minkovskii inequality, the condition \eqref{con-f'} and the moments' estimation \eqref{bou} of $u$, we obtain
\begin{align*}
& \|S*F(u(t))-S*F(u(r)) \|_{L^2(\Omega; \dot H^1)} \\
&\le \int_s^t \|S(t-r) F(u(r)) \|_{L^2(\Omega; \dot H^1)} {\rm d}r \\
&\quad +\int_0^s \|(S(t-s)-{\rm Id}_H) S(s-r) F(u(r)) \|_{L^2(\Omega; \dot H^1)} {\rm d}r \\
&\le C \sup_{t\in [0,T]}\|F(u(t)) \|_{L^2(\Omega; H)} \times \bigg(\int_s^t (t-r)^{-\frac12} {\rm d}r+(t-s)^\frac12 \bigg)
\le C(t-s)^\frac12,
\end{align*}
Combining the above two estimations, we get \eqref{hol-y}.
\end{proof}

\section{Fully Discrete Approximation}
\label{sec3}

In this section, we study a fully discrete scheme of Eq. \eqref{z} and derive its optimal strong convergence rate.

\subsection{Backward Euler--Spectral Galerkin Approximation}

Let $M,N\in \nn_+$.
Denote by $\PP_N$ the orthogonal projection operator from $H$ to its finite dimensional subspace $V_N$ spanned by the eigenvectors $\{e_k=\sqrt 2\sin(k\pi \cdot)\}_{k=1}^N$ corresponding to the first $N$ eigenvalues $\{\lambda_k=(k\pi)^2\}_{k=1}^N$ of negative Dirichlet Laplacian $-A$:
\begin{align}
\<\PP_N u, v_N\> &=\<u, v_N\>,
\quad u\in H, v_N\in V_N.
\end{align}
Denote by $A_N$ the restriction of the Laplacian operator $A$ on $V_N$.
Then the spectral approximation of Eq. \eqref{ac}-\eqref{dbc} is to find an
$\FFF_t$-adapted $V_N$-valued process
$u_N=\{u_N(t):\ t\in [0,T]\}$ such that
\begin{align} \label{fem}
&{\rm d}u_N(t)=(A_N u_N(t)+\PP_N F(u_N(t))) {\rm d}t
+\PP_N {\rm d}W_H(t),\ t\in [0,T]; \quad
u_N(0)=\PP_N u_0.
\end{align}

The mild solution of Eq. \eqref{fem} is given by
\begin{align*}
u_N(t)=S_N(t)\PP_N u_0+\int_0^t S_N(t-r)\PP_N F(u_N(r)){\rm d}r+W_A^N(t),
\quad t\in [0,T],
\end{align*}
where $S_N=\{S_N(t):=e^{A_N t}:\ t\in [0,T]\}$ is the analytic $\CC_0$-semigroup generated by $A_N$ and
$W_A^N=\{W_A^N(t)=\int_0^t S_N(t-r) \PP_N {\rm d}W_H(r):\ t\in [0,T]\}$ is the approximate Ornstein--Uhlenbeck process.
Define $z_N=u_N-W_A^N$.
Then $z_N$ solves the following random partial differential equation:
\begin{align}\label{spe-z}
\begin{split}
\dot z_N(t)=A_N z_N(t)+\PP_N F(z_N(t)+W_A^N(t)), \ t\in [0,T];  \quad
z_N(0)=\PP_N u_0.
\end{split}
\end{align}
Let $M\in \nn_+$ and denote $\zz_M:=\{0,1,\cdots,M\}$.
Similarly to Eq. \eqref{z-var}, it is clear that the spectral Galerkin approximation \eqref{fem} of Eq. \eqref{ac}-\eqref{dbc} is equivalent to find a $V_N$-valued process $u_N=z_N+W_A^N$ such that for all subdivision $\{t_m:\ m\in \zz_M\}$ of  $[0,T]$ and $v_N\in V_N$ it holds a.s. that
\begin{align}\label{spe-var}
\<z_N(t_{m+1})-z_N(t_m), v_N\>
+\int_{t_m}^{t_{m+1}} \<\nabla z_N, \nabla v_N\> {\rm d}r
=\int_{t_m}^{t_{m+1}} \<F(u_N), v_N\> {\rm d}r.
\end{align}

The backward Euler approximation of Eq. \eqref{spe-var} is to find a $V_N$-valued discrete process $\{z_N^m:\ N\in \nn_+,\ m\in \zz_M\}$ such that for all $v_N\in V_N$ it holds a.s. that
\begin{align}\label{full}
\<z_N^{m+1}-z_N^m, v_N\>+\tau \<\nabla z_N^{m+1}, \nabla v_N\>
=\tau \<F_N^{m+1}, v_N\>,
\end{align}
where $F_N^{m+1}:=F(z_N^{m+1}+W_A^N(t_{m+1}))$, $m\in \zz_{M-1}$.
We call the fully discrete scheme \eqref{full} the backward Euler--spectral Galerkin scheme of Eq. \eqref{z}.
Set
\begin{align} \label{unm}
u_N^m=z_N^m+W_A^N(t_m), \quad m\in \zz_M.
\end{align}
Then $u_N^m$ is an approximation of the solution $u$ of Eq. \eqref{ac}-\eqref{dbc} at $t_m$, $m\in \zz_{M-1}$.
In this sense, \eqref{full}--\eqref{unm} can be seen as the backward Euler--Galerkin scheme of Eq. \eqref{ac}-\eqref{dbc}.
For simplicity, throughout this section we assume that
$\{I_m:=(t_m,t_{m+1}]:\ m\in \zz_{M-1}\}$ is an equal length subdivision of $(0,T]$ and denote by $\tau=t_{m+1}-t_m$, $m\in \zz_{M-1}$, the temporal step size of this subdivision.

\subsection{Strong Convergence Rate}

This section is devoted to establishing the strong convergence rate for the backward Euler--spectral Galerkin scheme \eqref{full}-\eqref{unm} of Eq. \eqref{ac}-\eqref{dbc}.

We begin with the following essentially optimal error estimation between the Ornstein--Uhlenbeck process $W_A$ and its approximation $W_A^N$, as well as a uniform $L_x^\infty$-bound for $\PP_N u$ with respect to $N$.

\begin{lm} \label{ou-err}
Let $p\ge 1$.
There exists a constant $C=C(p)$ such that
\begin{align} \label{ou-err0}
\sup_{t\in [0,T]}\Big(\ee\Big[\|W_A(t)-W_A^N(t)\|^p\Big]\Big)^\frac1p
\le C N^{-\frac12}.
\end{align}
\end{lm}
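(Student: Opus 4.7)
The plan is to exploit the fact that $W_A(t)-W_A^N(t)$ is a Gaussian random variable (being a stochastic integral of a deterministic operator-valued integrand against the Wiener process $W_H$), so that all $L^p$ moments are equivalent to the $L^2$ moment up to a constant depending only on $p$. This reduces the statement to proving
\[
\sup_{t\in[0,T]}\ee\bigl[\|W_A(t)-W_A^N(t)\|^2\bigr]\le C N^{-1},
\]
with the case $p<2$ following by Jensen's inequality and the case $p>2$ by Gaussian moment equivalence.

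To establish the $L^2$ bound, I would write
\[
W_A(t)-W_A^N(t)=\int_0^t \bigl(S(t-r)-S_N(t-r)\PP_N\bigr)\,{\rm d}W_H(r)
\]
and apply the It\^o isometry in the Hilbert space $H$:
\[
\ee\bigl[\|W_A(t)-W_A^N(t)\|^2\bigr]
=\int_0^t \sum_{k=1}^\infty \bigl\|\bigl(S(t-r)-S_N(t-r)\PP_N\bigr)e_k\bigr\|^2\,{\rm d}r.
\]
Now, since $A$ and $A_N$ share the eigenpairs $(\lambda_k,e_k)$ for $k\le N$, and $S_N(t-r)\PP_N e_k=0$ for $k>N$, the summand vanishes for $k\le N$, and equals $e^{-2\lambda_k(t-r)}$ for $k>N$. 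Thus
\[
\ee\bigl[\|W_A(t)-W_A^N(t)\|^2\bigr]
=\sum_{k=N+1}^\infty \int_0^t e^{-2\lambda_k(t-r)}\,{\rm d}r
=\sum_{k=N+1}^\infty \frac{1-e^{-2\lambda_k t}}{2\lambda_k}
\le \sum_{k=N+1}^\infty \frac{1}{2(k\pi)^2}.
\]
Since $\sum_{k=N+1}^\infty k^{-2}\le C N^{-1}$ by a standard tail comparison with an integral, the right-hand side is bounded by $CN^{-1}$ uniformly in $t\in[0,T]$, as required.

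There is no real obstacle here: the argument is entirely computational, and the key simplification is that $S$ and $S_N\PP_N$ agree on $V_N$ and differ only on the tail eigenmodes, where the regularization exponent $-2\lambda_k(t-r)$ is non-positive. The only thing to be slightly careful about is the case $t=0$, where the bound is trivial. The factor $N^{-1/2}$ in the stated inequality corresponds exactly to the spatial regularity $\dot H^{1/2-\epsilon}$ of the Ornstein--Uhlenbeck process established in Lemma \ref{reg}, and matches the assertion made in \eqref{rk-ou-err0} about sharpness.
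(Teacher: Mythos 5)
Your argument is correct and follows essentially the same route as the paper: reduce to $p=2$ by Gaussianity, apply the It\^o isometry to $\int_0^t(S(t-r)-S_N(t-r)\PP_N)\,{\rm d}W_H(r)$, observe that only the modes $k>N$ contribute, and bound the resulting tail sum $\sum_{k>N}\frac{1-e^{-2\lambda_k t}}{2\lambda_k}\le\sum_{k>N}\frac{1}{2(k\pi)^2}\le CN^{-1}$. No gaps; this matches the paper's proof step for step.
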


\begin{proof}
The difference of the Ornstein--Uhlenbeck processes can be rewritten as
\begin{align*}
W_A(t)-W_A^N(t)
=\int_0^t (S(t-r)-S_N(t-r)\PP_N) {\rm d}W_H(r),
\quad t\in [0,T].
\end{align*}
Since $W_A-W_A^N$ is Gaussian, we only need to show \eqref{ou-err0} for $p=2$.
By It\^o isometry and elementary calculations, we get
\begin{align*}
\sup_{t\in [0,T]} \ee\Big[ \|W_A(t)-W_A^N(t)\|^2 \Big]
&\le \sum_{k=1}^\infty
\int_0^T \|(S(r)-S_N(r)\PP_N)e_k\|^2 {\rm d}r \\
&=\sum_{k=N+1}^\infty \frac{1-e^{-2\lambda_k T}}{2\lambda_k}
\le \frac1{2\pi^2} N^{-1}.
\end{align*}
This completes the proof of \eqref{ou-err0}.
\end{proof}

\begin{rk} \label{rk-ou-err}
The estimation \eqref{ou-err0} is sharp in the sense that
\begin{align} \label{rk-ou-err0}
\ee\Big[ \|W_A(t)-W_A^N(t)\|^2 \Big]
=\sum_{k=N+1}^\infty \frac{1-e^{-2\lambda_k t}}{2\lambda_k}
\ge \frac t{2(1+2\pi^2 t)} N^{-1},
\end{align}
for $t>0$, where we have used the elementary estimation $e^x\ge 1+x$ for any $x\ge 0$.
\end{rk}

\begin{lm}
Let $\epsilon>0$ and $u_0\in \dot H^{\frac12+\epsilon}$.
Then for any $p\ge 1$, there exists a constant $C=C(T,p,\epsilon)$ such that
\begin{align} \label{bou-pnx}
\sup_{N\in \nn_+}\sup_{t\in [0,T]}\|\PP_N u(t)\|_{L^p(\Omega; L_x^\infty)}
\le C\Big(1+\|u_0\|_{L_x^\infty}^{q-1} \Big).
\end{align}
\end{lm}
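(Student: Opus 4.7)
Projecting the mild form \eqref{mild} of Eq. \eqref{AC} onto $V_N$ and using the relation $\PP_N S(t) = S_N(t)\PP_N$ on $H$, I begin from
\begin{align*}
\PP_N u(t) = S_N(t)\PP_N u_0 + \int_0^t S_N(t-r)\PP_N F(u(r))\,{\rm d}r + W_A^N(t),
\end{align*}
and I bound the three summands separately in $L^p_\omega L^\infty_t L^\infty_x$, uniformly in $N\in\nn_+$. For the deterministic linear term, the contractivity of $\PP_N$ and $S_N(t)$ on the Hilbert space $\dot H^{1/2+\epsilon}$, combined with the one-dimensional Sobolev embedding $\dot H^{1/2+\epsilon}\hookrightarrow L^\infty_x$, gives
\[
\|S_N(t)\PP_N u_0\|_{L^\infty_x}\le C\|u_0\|_{\dot H^{1/2+\epsilon}},
\]
which is independent of $N$ and $t$ and absorbed into the final constant.

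\textbf{Stochastic convolution.} For $W_A^N$ I reproduce the factorization step used for $W_A$ in the proof of Lemma \ref{reg}: write $W_A^N(t)=\tfrac{\sin(\pi\alpha)}{\pi}\int_0^t(t-r)^{\alpha-1}S_N(t-r)W^N_\alpha(r)\,{\rm d}r$ with $W^N_\alpha(r):=\int_0^r(r-s)^{-\alpha}S_N(r-s)\PP_N\,{\rm d}W_H(s)$, apply the Burkholder inequality \eqref{bdg}, and control the radonifying norm by
\[
\|S_N(r)\PP_N\|^2_{\gamma(H;L^{2L}_x)}\le\sum_{k=1}^N e^{-2\lambda_k r}\|e_k\|^2_{L^{2L}_x}\le Cr^{-1/2},
\]
uniformly in $N$ since the sum is dominated by the full infinite sum already estimated in Lemma \ref{reg}. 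The Brze\'zniak mapping property of $R_\alpha$ passes verbatim to its projected counterpart $R^N_\alpha$ (because $S_N$ inherits the estimates of $S$ on the invariant subspace $V_N$), and Sobolev embedding $W_x^{\theta,2L}\hookrightarrow L^\infty_x$ with $2L\theta>1$ yields the desired uniform bound for this piece.

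\textbf{Nonlinear convolution and main obstacle.} I factorize analogously
\[
\int_0^t S_N(t-r)\PP_N F(u(r))\,{\rm d}r=\tfrac{\sin(\pi\alpha)}{\pi}\int_0^t(t-r)^{\alpha-1}S_N(t-r)F^N_\alpha(r)\,{\rm d}r,
\]
with $F^N_\alpha(r):=\int_0^r(r-s)^{-\alpha}S_N(r-s)\PP_N F(u(s))\,{\rm d}s$. The crux---and the main obstacle---is that $\PP_N$ is \emph{not} uniformly bounded on $L^\infty_x$ (classical Dirichlet kernel phenomenon), so the naive estimate $\|\PP_N F(u)\|_{L^\infty_x}\lesssim\|F(u)\|_{L^\infty_x}$ fails. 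My remedy is to combine the $L^2$-contractivity of $\PP_N$ with the one-dimensional heat-semigroup smoothing $\|S(\sigma)\|_{H\to L^{2L}_x}\le C\sigma^{-(L-1)/(4L)}$, giving
\[
\|F^N_\alpha(r)\|_{L^{2L}_x}\le C\int_0^r(r-s)^{-\alpha-(L-1)/(4L)}\|F(u(s))\|\,{\rm d}s.
\]
By the polynomial growth \eqref{con-f1} and the moment bound \eqref{mom} applied at level $L=q-1$, one obtains $\|F(u)\|_{L^p_\omega L^\infty_t H}\le C(1+\|u_0\|_{L^\infty_x}^{q-1})$---crucially, the linear dependence on $\|u_0\|_{L^{2(q-1)}_x}$ in \eqref{mom} (rather than an $(q-1)$-power arising from an $L^\infty$-embedding) is what keeps the exponent at $q-1$. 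Choosing $\alpha>0$ small enough that $\alpha+(L-1)/(4L)<1$ gives $\|F^N_\alpha\|_{L^p_\omega L^p_t L^{2L}_x}\le C(1+\|u_0\|_{L^\infty_x}^{q-1})$, and the mapping property of $R^N_\alpha$ together with Sobolev embedding $W_x^{\theta,2L}\hookrightarrow L^\infty_x$ for sufficiently large $L$ and $\theta$ just above $1/(2L)$ completes the bound. Summing the three contributions produces \eqref{bou-pnx}.
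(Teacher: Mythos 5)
Your decomposition and your treatment of the initial term and of $W_A^N$ coincide with the paper's proof (same factorization, same Burkholder/$\gamma$-norm computation, same uniform-in-$N$ H\"older--Sobolev conclusion). Where you genuinely diverge is the nonlinear convolution. The paper sidesteps the non-uniform boundedness of $\PP_N$ on $L_x^\infty$ by a shorter route: Lemma \ref{reg} already shows $S*F(u)\in L^p(\Omega;\CC([0,T];\dot H^{\gamma}))$ for every $\gamma\in(0,2)$, and since $\PP_N$ is an orthogonal projection commuting with $(-A)^{s/2}$ it is a contraction on $\dot H^{1/2+\epsilon}$; the uniform $L_x^\infty$-bound then follows directly from $\|\PP_N v\|_{1/2+\epsilon}\le\|v\|_{1/2+\epsilon}$ and the embedding $\dot H^{1/2+\epsilon}\hookrightarrow L_x^\infty$, with no second factorization needed. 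You instead re-factorize the projected convolution, pay for the projection with $L^2$-contractivity, and recover integrability through the heat smoothing $\|S(\sigma)\|_{H\to L_x^{2L}}\le C\sigma^{-(L-1)/(4L)}$ before applying the $R_\alpha$ mapping property; the exponent bookkeeping ($\alpha+(L-1)/(4L)<1$ versus $\alpha>\theta/2+1/p$) is consistent, and the identification $S_N(\cdot)|_{V_N}=S(\cdot)|_{V_N}$ justifies transferring the $R_\alpha$ estimates, so your argument is correct. What your version buys is self-containedness within the factorization framework (you never need the full $\dot H^{\gamma}$, $\gamma$ close to $2$, regularity of $S*F(u)$); what it costs is an extra smoothing estimate and a more delicate choice of parameters, whereas the paper's commutation trick disposes of the term in two lines by reusing regularity already established.
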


\begin{proof}
It is clear that
$$\PP_N u(t)=S(t) \PP_N u_0+\PP_N \bigg[\int_0^t S(t-r) F(u(r)) {\rm d}r\bigg]+W_A^N(t).$$
By stochastic Fubini theorem, the approximate Ornstein--Uhlenbeck process $W_A^N$ possesses the following factorization formula:
\begin{align*}
\int_0^t S_N(t-r)\PP_N {\rm d}W_H(r)
=\frac{\sin(\pi \alpha)}{\pi} \int_0^t (t-r)^{\alpha-1} S_N(t-r) W_\alpha^N(t) {\rm d}r,
\end{align*}
where $\alpha\in (0,1)$ and
$W_\alpha^N(t):=\int_0^t (t-r)^{-\alpha} S_N(t-r)\PP_N {\rm d}W_H(r)$, $t\in [0,T]$.
Let $p,q\ge 2$ and $t\in (0,T]$.
Applying Fubini theorem and the Burkholder inequality \eqref{bdg} as well as the equivalence \eqref{cotype} of $\gamma$-norm, we get similarly to Lemma \ref{reg} that
\begin{align*}
\big\| W_\alpha^N \big\|^p_{L_\omega^p L_t^p L_x^q}
&=\int_0^T \ee\bigg[\bigg\| \int_0^t (t-r)^{-\alpha} S_N(t-\tau) \PP_N {\rm d}W_H(r) \bigg\|_{L_x^q}^p \bigg]  {\rm d}t  \\
&\le C \int_0^T \bigg( \int_0^t r^{-2\alpha}
\|S_N(r) \PP_N\|^2_{\gamma(H;L_x^q)} {\rm d} r \bigg)^\frac p2 {\rm d}t \\
&\le C \int_0^T \bigg( \int_0^t r^{-2\alpha}
\bigg\|\sum_{k=1}^\infty (S_N(r) \PP_N e_k)^2\bigg\|_{L_x^{q/2}} {\rm d} r \bigg)^\frac p2 {\rm d}t\\
&\le C \int_0^T \bigg( \int_0^t r^{-(2\alpha+\frac12)} {\rm d} r \bigg)^\frac p2 {\rm d}t.
\end{align*}
The last integral is finite if and only if $\alpha\in (0,1/4)$.
As a result of the H\"older continuity characterization and  Sobolev embedding, $W_A^N \in L^p(\Omega; \CC^{\delta}([0,T]; \CC^\kappa))$ for any $\delta,\kappa\ge 0$ with $\delta+\kappa/2<1/4$ uniformly with respect to $N$.
In particular, there exists a constant $C=C(T,p)$ such that
\begin{align}\label{ou-N-infty}
\sup_{N\in \nn_+} \ee\Big[\sup_{t\in [0,T]} \|W_A^N\|^p_{L_x^\infty}\Big] \le C.
\end{align}
It is shown in Lemma \ref{reg} that
$\int_0^\cdot S(\cdot-r) F(u(r)) {\rm d}r\in L^p(\Omega; \CC^{\delta}([0,T]; W_x^{\theta,2L}))$ for any $\delta,\theta\ge 0$ with
$\delta+\theta/2<1$.
In particular, $\int_0^\cdot S(\cdot-r) F(u(r)) {\rm d}r\in L^p(\Omega; \CC([0,T]; \dot H^\gamma))$ for any $p\ge 1$ and $\gamma\in (0,2)$.
Therefore, by the Sobolev embedding $\dot H^{1/2+\epsilon}\subset L_x^\infty$ there exists a constant $C=C(T,p,\epsilon,u_0)$ such that
\begin{align*}
& \sup_{N\in \nn_+} \ee\Big[\sup_{t\in [0,T]}
\|\PP_N [S*F(u)(t)]\|^p_{L_x^\infty}\Big] \\
& \le C \sup_{N\in \nn_+} \ee\Big[\sup_{t\in [0,T]}
\|\PP_N [ S*F(u)(t) ]\|^p_{\frac12+\epsilon} \Big] \\
& \le C \sup_{N\in \nn_+} \ee\Big[\sup_{t\in [0,T]}
\|S*F(u)(t)\|^p_{\frac12+\epsilon} \Big]
\le C.
\end{align*}
Similarly,
\begin{align*}
\sup_{N\in \nn_+} \ee\bigg[\sup_{t\in [0,T]} \|S(t) \PP_N u_0\|^p_{L_x^\infty}\bigg]
\le C \|u_0\|^p_{\frac12+\epsilon}.
\end{align*}
Therefore, \eqref{bou-pnx} holds.
\end{proof}

Now we can give and prove our main result on convergence rate of the backward Euler--spectral Galerkin scheme \eqref{full}-\eqref{unm} under the $l_t^\infty L^2_\omega L^2_x \cap l_t^q L^q_\omega L^q_x$-norm for Eq. \eqref{ac}-\eqref{dbc}.
Here the $l_t^\infty L^2_\omega L^2_x$-norm and $l_t^q L^q_\omega L^q_x$-norm are temporally discrete norms similarly to the continuous norm given in \eqref{norm}.

\begin{tm} \label{u-unm}
Let $\tau\in (0,1)$ when $b<0$ and $\tau<1/(4b)$ when $b>0$.
Assume that $u_0\in \dot H^{3/2}$.
Let $u$ and $u_N^m$ denote the solutions of Eq. \eqref{AC} and the scheme \eqref{full}-\eqref{unm}, respectively.
Then for any $\gamma\in (0,1/2)$, there exists a constant
$C=C(T,b,L_f,\gamma,\|u_0\|_{3/2})$ such that
\begin{align} \label{u-unm0}
\sup_{m\in \zz_M}\ee\Big[\|u(t_m)-u_N^m\|^2 \Big]
+\sum_{m\in \zz_M} \ee\Big[\|u(t_m)-u_N^m\|_{L_x^q}^q \Big] \tau
\le C \big(N^{-2\gamma}+\tau^{1/2} \big).
\end{align}
\end{tm}

\begin{proof}
Let $\gamma\in (0,1/2)$.
Define $e_N^m:=\PP_N z(t_m)-z_N^m$, $m\in \zz_M$.
Then noting the relation between $u$ and $z$, we get $e_N^m\in V_N$ and
$$u(t_m)-u_N^m=({\rm Id}_H-\PP_N) u(t_m)+e_N^m,
\quad m\in \zz_M.$$
By triangle inequality and the moment's estimation \eqref{bou}, we get
\begin{align} \label{x-xnm01}
& \sup_{m\in \zz_M}\ee\Big[\|u(t_m)-u_N^m\|^2 \Big]
+\sum_{m\in \zz_M} \ee\Big[\|u(t_m)-u_N^m\|_{L_x^q}^q \Big] \tau
\nonumber \\
&\le \sup_{m\in \zz_M} \ee\Big[ \|({\rm Id}_H-\PP_N) u(t_m)\|^2\Big]
+\sum_{m\in \zz_M} \ee\Big[\|({\rm Id}_H-\PP_N) u(t_m)\|_{L_x^q}^q \Big] \tau
\nonumber  \\
&\quad +\sup_{m\in \zz_M} \ee\Big[\|e_N^m\|^2 \Big]
+\sum_{m\in \zz_M} \ee\Big[\|e_N^m\|_{L_x^q}^q \Big] \tau.
\end{align}

By the standard estimation of spectral Gakerin approximation that
$\|({\rm Id}_H-\PP_N) u\|\le C N^{-\gamma} \|u\|_\gamma$ for any $u\in \dot H^\gamma$ and the Sobolev embedding that
$\dot H^{1/2-1/q}\hookrightarrow L_x^q$ for $q\ge 2$, we obtain
\begin{align*}
\sup_{m\in \zz_M} \ee\Big[ \|({\rm Id}_H-\PP_N) u(t_m)\|^2\Big]
\le C N^{-2\gamma} \sup_{t\in [0,T]} \ee\Big[\|u(t)\|^2_\gamma \Big],
\end{align*}
and
\begin{align*}
& \sum_{m\in \zz_M} \ee\Big[\|({\rm Id}_H-\PP_N) u(t_m)\|_{L_x^q}^q \Big] \tau
\nonumber  \\
&\le \sup_{t\in [0,T]}  \ee\Big[\|({\rm Id}_H-\PP_N) (-A)^{\frac12(\frac12-\frac1q)} u(t)\|^q \Big] T \\
&\le C N^{-q(\widetilde \gamma-\frac12)-1}
\sup_{t\in [0,T]} \ee\Big[\|u(t)\|^q_{\widetilde \gamma} \Big].
\end{align*}
for any $\gamma,\widetilde \gamma\in (0,1/2)$.
In particular, for $\gamma\in (0,1/2)$ one can choose
$$\widetilde \gamma=\frac12-\frac{1-2\gamma}q \in \Big(0,\frac12\Big)$$
and get
\begin{align*}
&\sup_{m\in \zz_M} \ee\Big[ \|({\rm Id}_H-\PP_N) u(t_m)\|^2\Big]
+\sum_{m\in \zz_M} \ee\Big[\|({\rm Id}_H-\PP_N) u(t_m)\|_{L_x^q}^q \Big] \tau \\
&\le C N^{-2\gamma}
\Big(\sup_{t\in [0,T]} \ee\Big[\|u(t)\|^2_\gamma \Big]
+\sup_{t\in [0,T]} \ee\Big[\|u(t)\|^q_{\widetilde \gamma} \Big]\Big) \\
&\le C N^{-2\gamma},
\quad \forall\  \gamma\in \Big(0,\frac12\Big).
\end{align*}
In terms of \eqref{x-xnm01} and the above estimation, to show the estimations \eqref{u-unm0} we only need to prove
\begin{align} \label{en}
\sup_{m\in \zz_M}\ee\Big[\|e_N^m\|^2\Big]
+\sum_{m\in \zz_M} \ee\Big[\|e_N^m \|_{L_x^q}^q \Big] \tau
\le C \Big(N^{-2\gamma}+\tau^{1/2} \Big),
\quad \forall\ \gamma\in \Big(0,\frac12\Big).
\end{align}

Subtracting \eqref{z-var} from \eqref{full} with $v=v_N=e_N^{m+1}\in V_N\subset \dot H^1$, we get
\begin{align}\label{err-equ}
&\<({\rm Id}_H-\PP_N) (z(t_{m+1}-z(t_m))), e_N^{m+1}\>
+ \<e_N^{m+1}-e_N^m, e_N^{m+1}\>  \nonumber \\
&=-\int_{t_m}^{t_{m+1}} \<\nabla (z-z_N^{m+1}), \nabla e_N^{m+1}\>  {\rm d}r
+\int_{t_m}^{t_{m+1}} \<F(u)-F_N^{m+1}, e_N^{m+1}\>  {\rm d}r.
\end{align}
Since $\PP_N$ is an $L^2$-projection, we have
$$\ee\Big[\<({\rm Id}_H-\PP_N) (z(t_{m+1}-z(t_m))), e_N^{m+1}\>\Big]=0.$$
By the elementary identity $(a-b)a=\frac12 (a^2-b^2)+\frac12 (a-b)^2$, we get
\begin{align} \label{est-e}
\ee\Big[\<e_N^{m+1}-e_N^m, e_N^{m+1}\>\Big]
=\frac12 \Big(\ee\Big[\|e_N^{m+1}\|^2\Big]
-\ee\Big[\|e_N^m\|^2\Big]\Big)
+\frac12 \ee\Big[\|e_N^{m+1}-e_N^m\|^2\Big].
\end{align}
Applying the fact that $\<\nabla ({\rm Id}_H-\PP_N) u, \nabla v_N\>=0$ for any $u\in \dot H^1$ and $v_N\in V_N$, Cauchy--Schwarz inequality and the estimation \eqref{hol-y} with $\beta=1/2$, we obtain
\begin{align} \label{est-y}
&\ee\bigg[-\int_{t_m}^{t_{m+1}} \<\nabla (z(r)-z_N^{m+1}), \nabla e_N^{m+1}\> {\rm d}r\bigg]  \nonumber \\
&=-\int_{t_m}^{t_{m+1}}
\ee\Big[\<\nabla (z(r)-z(t_{m+1})),
\nabla e_N^{m+1}\>\Big] {\rm d}r
-\ee\Big[\|\nabla e_N^{m+1}\|^2 \Big]\tau \nonumber \\
&\le \frac12 \int_{t_m}^{t_{m+1}}
\ee\Big[\|\nabla (z(r)-z(t_{m+1}))\|^2\Big] {\rm d}r
-\frac12 \ee\Big[\|\nabla e_N^{m+1}\|^2 \Big]\tau \nonumber \\
&\le C \tau^{3/2}
-\frac12 \ee\Big[\|\nabla e_N^{m+1}\|^2 \Big]\tau.
\end{align}

For the third term in Eq. \eqref{err-equ}, the monotone condition \eqref{con-f} of $f$, H\"older and Young inequalities and the relation \eqref{unm} imply that
\begin{align*}
& \ee\bigg[\int_{t_m}^{t_{m+1}} \<F(u(r))-F(u_N^{m+1}), e_N^{m+1}\> {\rm d}r\bigg] \\
&=\int_{t_m}^{t_{m+1}} \ee \Big[\<F(u(r))-F(u(t_{m+1})), e_N^{m+1}\> {\rm d}r \Big] \\
&\quad +\ee \Big[\<F(u(t_{m+1}))-F(\PP_N u(t_{m+1})), e_N^{m+1}\> \Big] \tau \\
&\quad +\ee \Big[\<F(\PP_N u(t_{m+1}))-F(u_N^{m+1}), e_N^{m+1}\>\Big] \tau \\
&\le \frac C{\zeta} \int_{t_m}^{t_{m+1}}
\ee\Big[\|F(u(r))-F(u(t_{m+1}))\|^2\Big] {\rm d}r \\
&\quad +\frac C{\zeta}
\ee\Big[\|F(u(t_{m+1}))-F(\PP_N u(t_{m+1}))\|^2\Big] \tau \\
&\quad +(b+\zeta) \ee\Big[\|e_N^{m+1}\|^2 \Big] \tau
-L_f \ee\Big[\|e_N^{m+1}\|_{L_x^q}^q \Big] \tau,
\end{align*}
where $\zeta$ is an arbitrary positive number.
By the estimation \eqref{hol-f} with $\beta=1/2$, we get
\begin{align*}
\frac C{\zeta} \int_{t_m}^{t_{m+1}}
\ee\Big[\|F(u(r))-F(u(t_{m+1}))\|^2\Big] {\rm d}r
\le \frac C{\zeta} \tau^{3/2}.
\end{align*}
By the condition \eqref{con-f'} and the moments' estimations \eqref{bou} and \eqref{bou-pnx}, we have
\begin{align*}
\frac C{\zeta}
& \ee\Big[\|F(u(t_{m+1}))-F(\PP_N u(t_{m+1}))\|^2\Big] \tau \\
& \le \frac C{\zeta}
\Big[1+\Big(\ee\Big[\|u(t_{m+1})\|_{L_x^\infty}^{4(q-2)}\Big]\Big)^\frac12
+\Big(\ee\Big[\|\PP_N u(t_{m+1})\|_{L_x^\infty}^{4(q-2)}\Big]\Big)^\frac12 \Big]  \\
&\quad \times \Big(\ee\Big[\|({\rm Id}_H-\PP_N) u(t_{m+1})\|^4 \Big]\Big)^\frac12 \tau
\le \frac C{\zeta} N^{-2\gamma} \tau.
\end{align*}
Consequently,
\begin{align} \label{est-f}
& \ee\bigg[\int_{t_m}^{t_{m+1}} \<F(u(r))-F(u_N^{m+1}), e_N^{m+1}\> {\rm d}r \bigg] \nonumber \\
&\le \frac C \zeta \Big(N^{-2\gamma} +\tau^{1/2} \Big) \tau
+(b+\zeta) \ee\Big[\|e_N^{m+1}\|^2 \Big] \tau
-L_f \ee\Big[\|e_N^{m+1}\|_{L_x^q}^q \Big] \tau.
\end{align}
Combining the above estimations \eqref{est-e}--\eqref{est-f}, we derive
\begin{align*}
&\frac12 \Big(\ee\Big[\|e_N^{m+1}\|^2\Big]
-\ee\Big[\|e_N^m\|^2\Big]\Big)
+\frac12 \ee\Big[\|\nabla e_N^{m+1}\|^2 \Big]\tau \\
&\le \Big(C+\frac C \zeta\Big)
\Big(N^{-2\gamma}+\tau^{1/2} \Big) \tau
+(b+\zeta) \ee\Big[\|e_N^{m+1}\|^2 \Big] \tau
-L_f \ee\Big[\|e_N^{m+1}\|_{L_x^q}^q \Big] \tau.
\end{align*}
Then we deduce that
\begin{align*}
& \Big(1-2(b+\zeta) \tau\Big) \ee\Big[\|e_N^{m+1}\|^2\Big]
+\ee\Big[\|\nabla e_N^{m+1}\|^2 \Big]\tau
+2 L_f \ee\Big[\|e_N^{m+1}\|_{L_x^q}^q \Big] \tau \\
&\le \ee\Big[\|e_N^m\|^2\Big]
+\Big(C+\frac C \zeta\Big)
\Big(N^{-2\gamma}+\tau^{1/2} \Big) \tau.
\end{align*}

Summing over $m=0,1,\cdots,l-1$ with $1\le l\le M$, we obtain
\begin{align*}
& \Big(1-2(b+\zeta) \tau\Big)  \ee\Big[\|e_N^l\|^2\Big]
+\sum_{m=0}^l \ee\Big[\|\nabla e_N^m \|^2 \Big]\tau
+2 L_f \sum_{m=0}^l \ee\Big[\|e_N^m\|_{L_x^q}^q \Big] \tau \\
&\le \Big(C+\frac C \zeta\Big)
\Big(N^{-2\gamma}+\tau^{1/2} \Big) 
+2(b+\zeta) \sum_{m=0}^{l-1}\ee\Big[\|e_N^m\|^2\Big] \tau.
\end{align*}
When $b<0$ we set $\zeta=-b$ and $\tau\in (0,1)$, while when $b>0$ we set $\tau<1/(4b)$ and  $\zeta$ sufficiently small.
Through the discrete Gr\"onwall inequality, we conclude the estimation \eqref{en}.
This completes the proof of \eqref{u-unm0}.
\end{proof}

\section{Numerical Experiments}

In this section, we give several numerical tests to verify the optimality of the strong convergence rate under the $l_t^\infty L^2_\omega L^2_x \cap l_t^q L^q_\omega L^q_x$-norm in Theorem \ref{u-unm} for the backward Euler--spectral Galerkin scheme \eqref{full}-\eqref{unm}.

Due to Lemma \ref{ou-err} and Remark \ref{rk-ou-err}, the spatial convergence rate of the backward Euler--spectral Galerkin scheme \eqref{full}-\eqref{unm} is sharp.
Our main concern here is to simulate the temporal strong convergence rate, under the $l_t^\infty L^2_\omega L^2_x \cap l_t^q L^q_\omega L^q_x$-norm (with $q=6$), of the fully discrete scheme \eqref{full} for the following SPDE driven by an additive Brownian sheet $W$:
\begin{align}
\frac{\partial u}{\partial t}
&=\frac{\partial^2 u}{\partial x^2}
+\big(u^4-u^5 \big)
+\frac{\partial^2 W}{\partial t \partial x},  \label{sac2}
\end{align}
with homogeneous Dirichlet boundary condition \eqref{dbc} and the initial value
\begin{align*}
u_0(x)=\sum_{k=1}^\infty \frac{e_k(x)}{k^2},
\quad e_k(x)=\sqrt 2 \sin(k\pi x),\quad x\in (0,1).
\end{align*}

We use the backward Euler--spectral Galerkin scheme \eqref{full}-\eqref{unm} with $f(x)=x^4-x^5$ and the initial datum
$z_0^N=\PP_N u_0=\sum_{k=1}^N k^{-2} e_k$
to fully discretize Eq. \eqref{sac2}.
To simulate the approximate Ornstein--Uhlenbeck process $W_A^N$,
it is clear that
\begin{align*}
W_A^N(t_m)
=\int_0^{t_m} S(t_m-r) \PP_N {\rm d}W_H(r)
=\sum_{k=1}^N \bigg[ \int_0^{t_m} e^{-\lambda_k (t_m-r)} {\rm d}\beta_k(r) \bigg] e_k,
\end{align*}
where
$$\bigg\{\int_0^{t_m} e^{-\lambda_k (t_m-r)} {\rm d}\beta_k(r)\sim \mathcal N \Big(0,\frac{1-e^{-2\lambda_k t_m}}{2\lambda_k} \Big):
\ m\in \zz_M \bigg\}$$
is a sequence of independent centered Gaussian random variable.
Thus
\begin{align*}
W_A^N(t_m)
=\sum_{k=1}^N \sqrt{\frac{1-e^{-2\lambda_k t_m}}{2\lambda_k}} \zeta_k e_k,
\end{align*}
where $\{\zeta_k\}_{k\in \zz_N}$ is a sequence of independent normally distributed random variables.

To simulate a reference solution, we perform the full discretization by $N=512$ for the dimension of spectral Galerkin approximation and by
$\tau=2^{-13}$ for the temporal step size of the scheme \eqref{full}.
The expectation is approximated from the average of $1000$ sample paths.
To simulate the temporal strong convergence rate of the scheme \eqref{full}, we take the step size by $\tau=2^{-i}$ with $i=7,8,9,10$.

Figure \ref{fig} displays the temporal mean-square convergence rate (under the $l_t^\infty L^2_\omega L^2_x$-norm) and another type of temporal strong convergence rate under the $l_t^6 L^6_\omega L^6_x$-norm of the backward Euler--spectral Galerkin scheme \eqref{full}-\eqref{unm} for Eq. \eqref{sac2}.
By Theorem \ref{u-unm}, the strong convergence orders under the $l_t^\infty L^2_\omega L^2_x$-norm and the $l_t^6 L^6_\omega L^6_x$-norm are $1/4$ and $1/{2q}=1/{12}$, respectively.
The temporal mean-square convergence rate $\OO(\tau^{1/4})$ of the scheme \eqref{full}-\eqref{unm} can be confirmed in Figure \ref{fig} (a), and
the temporal convergence rate $\OO(\tau^{1/12})$ of the scheme \eqref{full}-\eqref{unm} can be confirmed in Figure \ref{fig} (b).

\begin{figure}[htbp]
\centering
\subfigure[]{
\begin{minipage}{6cm}
\centering
\includegraphics[height=5.5cm,width=6cm]{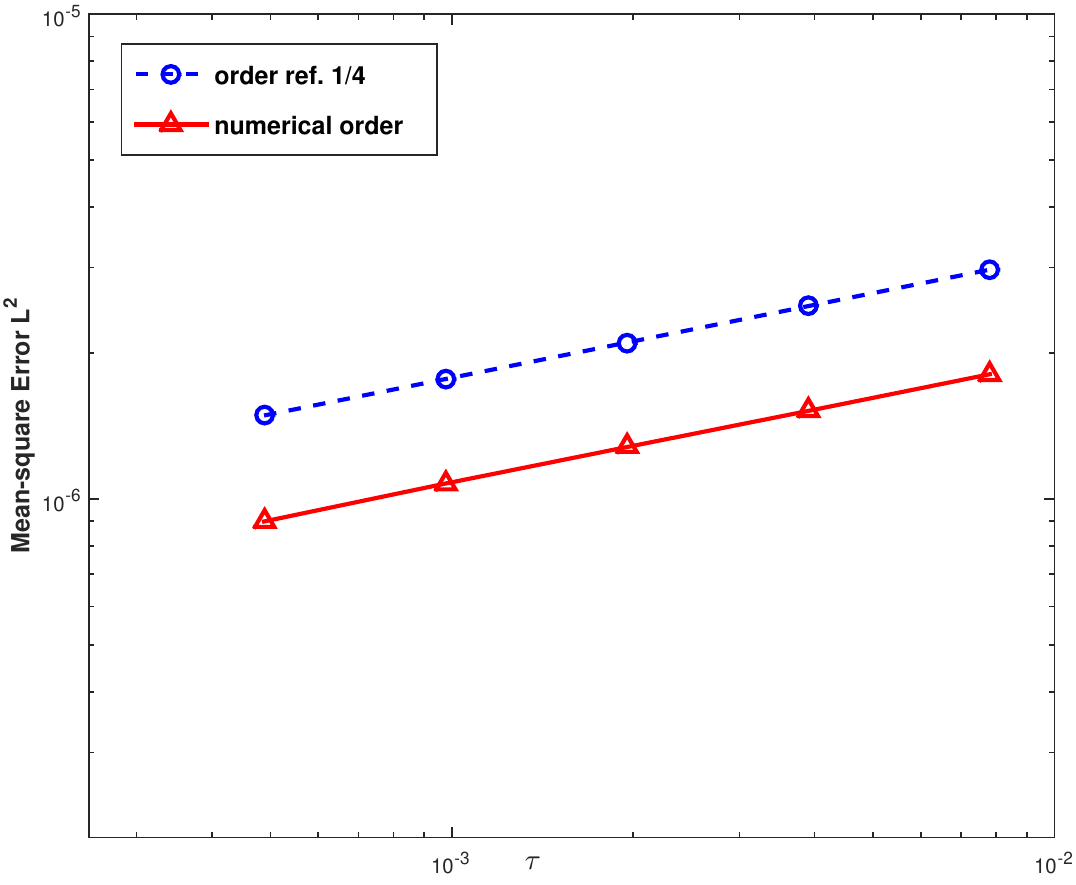}\\
\end{minipage}
}
\subfigure[]{
\begin{minipage}{6cm}
\centering
\includegraphics[height=5.5cm,width=6cm]{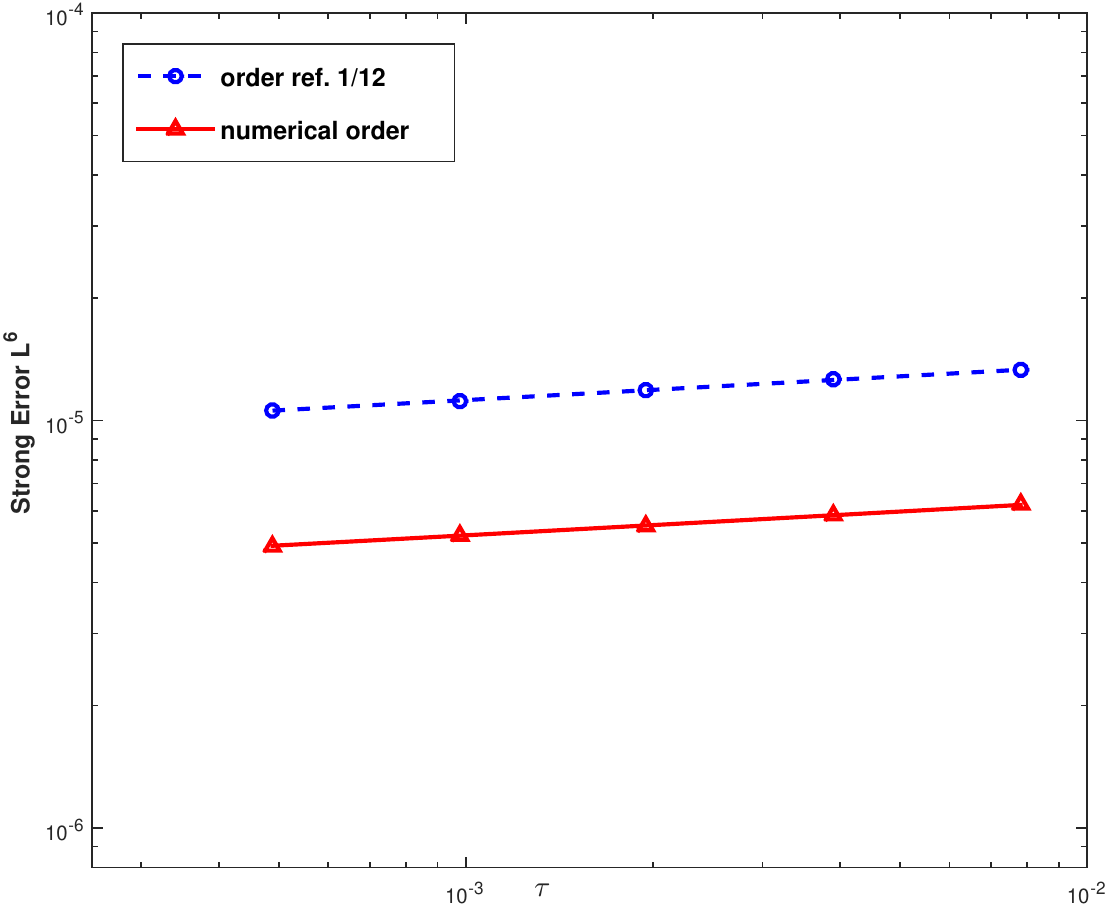}\\
\end{minipage}
}
\caption{Temporal convergence rates under the norms of (a) $l_t^\infty L^2_\omega L^2_x$ and (b) $l_t^6 L^6_\omega L^6_x$.}  \label{fig}
\end{figure}

\section*{Acknowledgements}

We thank the anonymous referee for very helpful remarks and suggestions.
We also thank Dr. Lihai Ji from Institute of Applied Physics and Computational Mathematics in  Beijing for his help and comments on numerical tests.
This work is partially supported by Hong Kong Research Grants Council General Research Fund (grants 15300417 and 15325816).

\bibliographystyle{plain}
\bibliography{bib}
\end{document}